
\documentclass[reqno]{amsart}
\usepackage{amssymb,amsmath,amsthm,latexsym}
\usepackage{amsrefs,cancel,mathrsfs}


\author{Hyun Chul Jang}
\address{Department of Mathematics, University of Connecticut}
\email{hyun.c.jang@uconn.edu}

\title{Asymptotically Hyperbolic $3$-metric with Ricci flow foliation}

\begin{document}

\newtheorem{theorem}{Theorem} 
\newtheorem{claim}[theorem]{Claim}
\newtheorem{lemma}[theorem]{Lemma}
\newtheorem{proposition}[theorem]{Proposition}
\newtheorem{corollary}[theorem]{Corollary}
\theoremstyle{definition}
\newtheorem{definition}{Definition}
\theoremstyle{remark}
\newtheorem{remark}[theorem]{Remark}

\numberwithin{equation}{section}


\begin{abstract}
In general relativity, there have been a number of successful constructions for asymptotically flat metrics with a certain background foliation. In particular, C. -Y. Lin \cite{Lin:2014bk} used a foliation by the Ricci flow on $2$-spheres to establish an asymptotically flat extension and C. Sormani and Lin \cite{Lin:2016ca} proved useful results with this extension. In this paper, we construct asymptotically hyperbolic $3$-metrics with the Ricci flow foliation inspired by Lin's work.  We also study the rigid case when the Hawking mass of the inner surface of the manifold agrees with its total mass as in \cite{Lin:2016ca}.
\end{abstract}

\maketitle

\section{Introduction}\label{introduction}
In general relativity, the vacuum Einstein field equations of a spacetime $(M,\gamma)$ with cosmological constant $\Lambda$ can be written as
\[\text{Ric}_\gamma=\Lambda \gamma.\]
When $\Lambda<0$ the lowest energy solution is called the anti-de Sitter(AdS) spacetime. The significance of AdS spacetimes has increased especially from the AdS/CFT correspondence. In this context, asymptotically hyperbolic Riemannian 3-manifolds arise naturally as spacelike hypersurfaces of AdS spacetimes. Moreover, these manifolds can be considered spacelike hypersurfaces in asymptotically flat spacetimes which approach null infinity.

The construction of asymptotically flat solutions to the Einstein constraint equations, which provide Cauchy data for Einstein equation with $\Lambda=0$, has been studied extensively. From physical motivation, the dominant energy condition requires the scalar curvature of such metrics to be nonnegative. Due to this condition, R. Bartnik \cite{Bartnik:1993er} introduced a construction of $3$-metrics with prescribed scalar curvature by considering $3$-manifolds foliated by round spheres. There have been other interesting results inspired by this foliation construction. See \cite{Shi:2002dv, Smith:2004wp, Lin:2014bk}. In particular, C. -Y. Lin \cite{Lin:2014bk} used Hamilton's modified Ricci flow on surfaces as foliation to construct an asymptotically flat end. Let $(\Sigma,g)$ be a surface diffeomorphic to $\mathbb{S}^2$ whose area is $4\pi$. Recall that Hamilton's modified Ricci flow in \cite{Hamilton:1988bp} is defined as the family $(\Sigma,g(t))$ satisfying
\begin{equation}\label{RF}
\left\{
\begin{split}
\frac{\partial}{\partial t}g_{ij}&=(r-R)g_{ij}+2D_iD_j f=2M_{ij},\\
g(1)&=g,
\end{split}\right.
\end{equation} 
where $R=R(t)$ is the scalar curvature of $g(t)$ on $\Sigma$ and
$$r=\frac{1}{|\Sigma_{t}|}\int_\Sigma R(t)\,d\mu_t=2,$$
where $|\Sigma_{t}|$ is the area of $\Sigma$ with respect to $g(t)$ and $f=f(t,x)$ is the Ricci potential satisfying the equation 
$$\Delta f=R-r.$$
Note that this flow converges to a metric of constant curvature exponentially fast in any $C^k$-norm (see \cite[Appendix B]{Chow:vq}). Consider a metric $\overline{g}$ on $N=[1,\infty)\times \Sigma$ of the form
$$\overline{g}=u^2 dt^2 +t^2 g(t).$$
The unknown function $u$ on $N$ with prescribed scalar curvature $\overline{R}$ satisfies a quasilinear second order parabolic equation derived from the Gauss equation for each slice $\{t\}\times\Sigma$. Therefore, Lin obtained asymptotically flat $3$-metrics by solving this equation with some conditions for $\overline{R}$. Moreover, C. Sormani and Lin \cite{Lin:2016ca} studied the class of asymptotically flat three-dimensional Riemannian manifolds foliated by Hamilton's modified Ricci flow, and they used these manifolds to estimate the Bartnik mass. In addition, they showed rigidity and monotonicity of the Hawking mass of level sets given in the foliation (see \cite[Theorem 5]{Lin:2016ca}). 

In this paper, we construct an asymptotically hyperbolic $3$-metric using Ricci flow foliation method and investigate properties of the metric. First we recall the most general form of definition of asymptotically hyperbolic manifolds in \cite{Chrusciel:2003fg}. Let $\mathbb{H}^n$ denote the standard hyperbolic space. The metric $g_0$ on $\mathbb{H}^n$ can be written as
\[g_0=dr^2+(\sinh r)^2 h_0,\]
where $r$ is the $g_0$-distance to a fixed point $o$ and $h_0$ is the standard metric on $\mathbb{S}^{n-1}$. Let $\varepsilon_0=\partial_r,\varepsilon_\alpha=\frac{1}{\sinh\rho}f_\alpha,\alpha=1,\ldots,n-1$, where $\{f_\alpha\}_{1\leq\alpha\leq n-1}$ is a local orthonormal frame of $(\mathbb{S}^{n-1},h_0)$ so that $\{\varepsilon_i\}_{0\leq i\leq n-1}$ forms a local orthonormal frame on $\mathbb{H}^n$. Let $S_r$ denote the geodesic sphere in $\mathbb{H}^n$ of radius $r$ centered at $o$.
\begin{definition}\label{defn AH}
	A manifold $(M^n,g)$ is called \emph{asymptotically hyperbolic} if, outside a compact set, $M$ is diffeomorphic to the exterior of some geodesic sphere $S_{r_0}$ in $\mathbb{H}^n$ such that the metric components $g_{ij}=g(\varepsilon_i,\varepsilon_j),0\leq i,j\leq n-1$, satisfy
	\[|g_{ij}-\delta_{ij}|=O(e^{-\tau r}),|\varepsilon_k(g_{ij})|=O(e^{-\tau r}),|\varepsilon_k(\varepsilon_l(g_{ij}))|=O(e^{-\tau r})\]
	for some $\tau>\frac{n}{2}$.
\end{definition}
There are several other versions of the definition with different contexts, see \cite{Chen:2016hm}, \cite{Wang:2001hg}, \cite{Neves:2010hb}.

In section \ref{construction}, we derive the quasilinear parabolic equation for a function $u$ from prescribed scalar curvature $\overline{R}$ on $N$, and prove the existence of a solution. To estimate $C^0$ bounds, we introduce the substitution $w=u^{-2}$ which provides an explicit form of the bounds. (c.f. Remark \ref{substitution remark})

In section \ref{metric section}, we construct an asymptotically hyperbolic $3$-metric by the result in section \ref{construction}. 
\begin{theorem}\label{theorem 1}
	Let $(\Sigma,g)$ be a $2$-manifold which is diffeomorphic to $\mathbb{S}^2$ with area $4\pi$ and let $N$ be the product manifold $[1,\infty)\times\Sigma$. Then for any $H\in C^{\infty}(\Sigma)$ with $H>0$, there exists an asymptotically hyperbolic $3$-metric on $N$ of the form
	\begin{equation}\label{metric}
	\overline{g}=\frac{u^2}{1+t^2}dt^2+t^2g(t),
	\end{equation}
	with the scalar curvature $\overline{R}\equiv-6$ where $u\in C^\infty(N)$ is positive everywhere, and $g(t)$ is the solution to Hamilton's modified Ricci flow (\ref{RF}). Here $H$ is the mean curvature in direction $\partial_{t}$ on $\{1\}\times\Sigma$.
\end{theorem}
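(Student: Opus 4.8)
The plan is to reduce the construction of the metric to solving a scalar PDE for the unknown $u$ on $N=[1,\infty)\times\Sigma$, obtained by imposing $\overline{R}\equiv-6$. First I would compute the scalar curvature $\overline{R}$ of the warped-type metric $\overline{g}=\frac{u^2}{1+t^2}dt^2+t^2 g(t)$. The key tool is the Gauss equation relating the intrinsic curvature of each leaf $\Sigma_t=\{t\}\times\Sigma$, the second fundamental form in the $\partial_t$ direction, and the ambient scalar curvature. Since $g(t)$ evolves by Hamilton's modified Ricci flow \eqref{RF}, the $t$-derivatives of the induced metric $t^2 g(t)$ are explicit: $\partial_t(t^2 g_{ij}) = 2t g_{ij} + t^2(r-R)g_{ij} + 2t^2 D_iD_j f$, and this feeds into both the mean curvature $H_t$ of $\Sigma_t$ and the normal Laplacian terms. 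Setting the resulting expression for $\overline{R}$ equal to $-6$ should yield a quasilinear second-order parabolic equation for $u$ (parabolic in $t$, which plays the role of the backward time variable as $t\to\infty$), exactly as in Lin's asymptotically flat construction but with the hyperbolic normalization encoded in the $\frac{1}{1+t^2}$ factor.

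Next I would invoke the existence result established in Section \ref{construction} for this parabolic equation. The substitution $w=u^{-2}$ is advertised in the excerpt (Remark \ref{substitution remark}) as linearizing the equation enough to extract explicit $C^0$ bounds; I would use this to show the solution $u$ exists for all $t\in[1,\infty)$, stays positive and bounded away from zero, and has controlled growth. The prescribed positive function $H\in C^\infty(\Sigma)$ enters as the initial data: since $H$ is the mean curvature of $\{1\}\times\Sigma$ in the $\partial_t$ direction, the relation between $H$ and the normal derivative of the metric at $t=1$ translates into an initial condition $u(1,\cdot)=u_0$ determined algebraically from $H$ and $g$. Positivity of $H$ guarantees $u_0>0$, so the parabolic maximum principle (or the explicit $w$-bounds) propagates positivity forward in $t$.

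Finally I would verify that the metric $\overline{g}$ built from this $u$ actually satisfies Definition \ref{defn AH} of asymptotic hyperbolicity. This is where the hyperbolic normalization matters: I would compare $\overline{g}$ with the hyperbolic background $g_0=dr^2+(\sinh r)^2 h_0$ under a change of variable matching $t$ (the area radius of the leaves) to the geodesic radius $r$, roughly $t\sim \sinh r$ for large $r$. Because the modified Ricci flow $g(t)$ converges exponentially fast in every $C^k$-norm to the round metric $h_0$ of curvature $1$ (cited from \cite[Appendix B]{Chow:vq}), the leaves $t^2 g(t)$ become exponentially close to $(\sinh r)^2 h_0$, and the asymptotic expansion of $u$ extracted from the $C^0$ estimates should give the decay $|g_{ij}-\delta_{ij}|=O(e^{-\tau r})$ together with the two derivative conditions, for some $\tau>3/2$. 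I expect the main obstacle to be precisely this last step: controlling the decay rate $\tau$ uniformly, which requires combining the exponential convergence of the flow with sharp enough asymptotics for $u$ (and its $t$- and $\Sigma$-derivatives) derived from the parabolic equation, and checking that the leaf-adapted orthonormal frame $\{\varepsilon_i\}$ converts these estimates into the frame-component bounds demanded by Definition \ref{defn AH}.
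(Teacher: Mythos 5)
Your outline follows the same route as the paper---deriving the quasilinear parabolic equation \eqref{targetPDE} from the Gauss equation, using the substitution $w=u^{-2}$ for $C^0$ control, proving decay of $u-1$, and checking asymptotic hyperbolicity in an adapted frame---but it has a genuine gap at the global existence step, which is precisely the step the paper itself flags as ``the crucial step.'' You assert that $H>0$, hence $u(1,\cdot)=2\sqrt{2}/H>0$, suffices because the maximum principle ``propagates positivity forward in $t$.'' Positivity of $u$ is not the obstruction; finite-time blow-up of $u$ is, i.e.\ the possibility that $w=u^{-2}$ reaches zero. The maximum-principle bound (Proposition \ref{proposition for delta} with $t_0=1$) reads
\[
u^{-2}(t,\cdot)\;\geq\;\delta_*(t)+\frac{2}{t(1+t^2)}\,\bigl(u^*(1)\bigr)^{-2}\exp\Bigl(-\int_1^t\frac{\tau\bigl(|M|^*\bigr)^2}{2}\,d\tau\Bigr),
\]
and with $\overline{R}\equiv-6$ the function $\delta_*$ in \eqref{lower delta} is built from $\bigl(R_{g(s)}+6s^2\bigr)_*$, which is negative for $s$ near $1$ whenever the initial scalar curvature of $g$ dips below $-6$ somewhere on $\Sigma$. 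In that case $\delta_*(t)<0$ for some $t$ and the constant $K$ in \eqref{constant K} is strictly positive, so the right-hand side above is guaranteed to stay positive only under the quantitative condition $u(1,\cdot)<1/\sqrt{K}$, which is \eqref{global condition} and translates into $H>2\sqrt{2K}$, hypothesis \eqref{extension condition} of Theorem \ref{main theorem}. Your proposal produces no such threshold, and no bare maximum-principle argument will: below it, the available estimates no longer rule out $w$ hitting zero in finite time, so global existence---and with it the whole construction---is not established for arbitrary $H>0$. (This also exposes a defect of the statement itself: the paper proves it only as the specialization $\overline{R}\equiv-6$ of Theorem \ref{main theorem}, so its own argument likewise requires $H>2\sqrt{2K}$; ``any $H>0$'' is covered only when $K=0$, e.g.\ when $R_g\geq-6$ pointwise on the initial surface.)

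The remaining steps of your plan are essentially the paper's: the initial datum is indeed $\varphi=2\sqrt{2}/H$, read off from $H=\frac{2\sqrt{1+t^2}}{tu}$ at $t=1$; Lemma \ref{decay Lemma} supplies $|u-1|\leq Ct^{-3}$ together with derivative bounds via interior Schauder estimates and bootstrapping; and the final verification differs only in packaging: you would check Definition \ref{defn AH} directly under the substitution $t=\sinh r$, while the paper expands $\overline{g}_{tt}=\tfrac{1}{1+t^2}+O(t^{-5})$ and $\overline{g}_{ij}=t^2\sigma_{ij}+O(e^{-ct})$ and appeals to the coordinate-based definition of \cite{Chen:2016hm}. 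That cosmetic difference is harmless; the missing existence threshold is not.
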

As in \cite{Lin:2014bk}, the crucial step here is to verify when the solution of the equation in section \ref{construction} exists. In fact, we can construct an asymptotically hyperbolic $3$-metric with a more general condition on the scalar curvature as
\[\overline{R}=-6+O(t^{-5})\geq -6.\]
The dominant energy condition requires that $\overline{R}\geq -6$, and the decay is needed to control the behavior of $u$ near infinity. See Theorem \ref{main theorem}.
 
In section \ref{hawking mass section}, we prove the corresponding rigidity and monotonicity result of the hyperbolic analogue of the Hawking mass as in \cite[Theorem 5]{Lin:2016ca}. 
The mass of asymptotically hyperbolic Riemannian manifolds is defined as a linear functional by P. T. Chru\'sciel and M. Herzlich \cite{Chrusciel:2003fg}.
\begin{definition}
	The mass functional $\mathbf{M}(g)$ is a linear functional on the kernel of $(\mathcal{DS})^*_{g_0}$, where $(\mathcal{DS})^*_{g_0}$ is the formal adjoint of the linearization of the scalar curvature at $g_0$. Let $\theta=(\theta^1,\ldots,\theta^n)\in \mathbb{S}^{n-1}\subset\mathbb{R}^n$ and the functions
	\[V^{(0)}=\cosh r,\, V^{(j)}=\theta^j\sinh r\text{ for }1\leq j\leq n.\]
	forms a basis of the kernel of $(\mathcal{DS})^*_{g_0}$. Using this basis, $\mathbf{M}(g)$ is defined as
	\[\begin{split}
	\mathbf{M}(g)(V^{(i)})&=\frac{1}{2(n-1)\omega_{n-1}}\lim_{r\rightarrow\infty}\int_{S_r}\left[V^{(i)}(\text{div}_0 h-d\text{tr}_0 h)\right.\\
	&\left.\hspace{2in}-h(\nabla_0 V^{(i)},\cdot)+(\text{tr}_0 h)dV^{(i)}\right](\nu_0)d\sigma_0
	\end{split}\]
	where $h=g-g_0,\nu_0$ is the $g_0$-unit outward normal to $S_r$ and $d\sigma_0$ is the volume element on $S_r$ of the metric induced from $g_0$. The notation $\text{div}_0,\text{tr}_0,\nabla_0$ denotes the divergence, trace, the covariant derivative with respect to $g_0$, respectively. 
\end{definition}
The Hawking mass on asymptotically hyperbolic manifolds is introduced by X. Wang \cite{Wang:2001hg}. (see also \cite{Neves:2010hb}).
\begin{definition}
	Let $(M^3,g)$ be a $3$-dimensional asymptotically hyperbolic manifold, and let $\Sigma\subset M^3$ be a closed $2$-surface. Then the Hawking mass $\tilde{\mathfrak{m}}_H(\Sigma)$ of $\Sigma$ is defined as 
	\begin{equation}\label{hawking mass definition}
	\tilde{\mathfrak{m}}_H(\Sigma)=\sqrt{\frac{|\Sigma|}{16\pi}}\left(1-\frac{1}{16\pi}\int_{\Sigma}H^2\, d\sigma+\frac{|\Sigma|}{4\pi}\right)
	\end{equation}
	where $d\sigma$ is the induced volume form with respect to $g$.
\end{definition}
To study the rigid case, we use the following result about the Hawking mass and the mass functional
\begin{equation}\label{hawking mass relation}
\mathbf{M}(g)(V^{(0)})=\lim_{r\rightarrow\infty}\tilde{\mathfrak{m}}_H(S_r)
\end{equation}
proved by P. Miao, L. -F. Tam, and N. Xie \cite{Miao:2017df}.
\begin{theorem}\label{rigidity theorem}
	Let $(\Sigma,g_1)$ be a surface diffeomorphic to $\mathbb{S}^2$ with positive mean curvature (not necessarily constant) and let $N=[1,\infty)\times\Sigma$ be an asymptotically hyperbolic extension obtained in Theorem \ref{main theorem}. Then $\tilde{\mathfrak{m}}_H(\Sigma_{t})$ is nondecreasing, where $\Sigma_{t}=\{t\}\times\Sigma$. Furthermore, if 
	\begin{equation}
	\mathbf{M}(\overline{g})(V^{(0)})=\tilde{\mathfrak{m}}_H(\Sigma)
	\end{equation}
	then $\overline{R}=-6$ everywhere, $\Sigma$ is isometric to the standard sphere, and $N$ is rotationally symmetric. If $\tilde{\mathfrak{m}}_H(\Sigma)=0$ then $N$ is isometric to a rotationally symmetric region in a hyperbolic space. If $\tilde{\mathfrak{m}}_H(\Sigma)=m>0$ then $N$ is isometric to a rotationally symmetric region in anti-de Sitter Schwarzschild space of mass $m$.
\end{theorem}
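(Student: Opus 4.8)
The plan is to reduce everything to the single quantity $\Phi(t)=\int_\Sigma u^{-2}\,d\mu$, exploiting that the modified flow (\ref{RF}) is pointwise volume-preserving; throughout, all intrinsic operators and norms on the slices are taken with respect to $g(t)$. First I would record the elementary geometry of the leaves $\Sigma_t$. Since $\operatorname{tr}_{g(t)}M_{ij}=(r-R)+\Delta f=0$, the area form of $g(t)$ is independent of $t$; write it simply as $d\mu$. Consequently $|\Sigma_t|=4\pi t^2$, and a direct computation of the second fundamental form of $\Sigma_t$ for the metric (\ref{metric}) gives $H=\tfrac{2\sqrt{1+t^2}}{tu}$ (the trace-free part of $M$ drops out of the trace). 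Setting $w=u^{-2}$ as in the construction, the defining formula (\ref{hawking mass definition}) then collapses to
\[\tilde{\mathfrak m}_H(\Sigma_t)=\frac{t(1+t^2)}{2}\Big(1-\frac{1}{4\pi}\,\Phi(t)\Big),\qquad \Phi(t)=\int_\Sigma w\,d\mu.\]

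The core of the argument is a monotonicity formula for this expression. Recall from Section \ref{construction} that prescribing $\overline{R}$ is equivalent, via the Gauss equation for the slices, to a parabolic equation for $u$; rewriting it through $w=u^{-2}$ and the decomposition $\overline{R}=R^\gamma-2\nu(H)-H^2-|A|^2-\tfrac{2}{\phi}\Delta_\gamma\phi$ (with lapse $\phi=u/\sqrt{1+t^2}$, $\nu=\phi^{-1}\partial_t$, and $\gamma=t^2g(t)$) produces
\[\partial_t w=\frac{t}{2(1+t^2)}\Big[-\overline{R}+\tfrac{R}{t^2}+\tfrac{4w}{t^2}-(1+t^2)w\big(\tfrac{6}{t^2}+|M|^2\big)-\tfrac{2}{t^2u}\Delta_{g(t)} u\Big].\]
I would integrate this over $(\Sigma,d\mu)$, using Gauss--Bonnet $\int_\Sigma R\,d\mu=8\pi$ and integrating the Laplacian term by parts as $\int_\Sigma u^{-1}\Delta_{g(t)} u\,d\mu=\tfrac14\int_\Sigma w^{-2}|\nabla w|^2\,d\mu$, and then differentiate $\tilde{\mathfrak m}_H(\Sigma_t)$. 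After the $\Phi$-terms cancel, the result is
\[\frac{d}{dt}\,\tilde{\mathfrak m}_H(\Sigma_t)=\frac{t^2}{16\pi}\int_\Sigma(\overline{R}+6)\,d\mu+\frac{t^2(1+t^2)}{16\pi}\int_\Sigma w\,|M|^2\,d\mu+\frac{1}{32\pi}\int_\Sigma\frac{|\nabla w|^2}{w^2}\,d\mu.\]
Since $w>0$ and $\overline{R}\ge-6$, all three terms are nonnegative, giving the claimed monotonicity.

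For the rigidity, I would combine (\ref{hawking mass relation}) with the asymptotically hyperbolic structure: because $g(t)$ converges to the round metric and $u\to1$, the leaves $\Sigma_t$ are asymptotic to the geodesic spheres $S_r$, so $\lim_{t\to\infty}\tilde{\mathfrak m}_H(\Sigma_t)=\mathbf M(\overline g)(V^{(0)})$. The hypothesis $\mathbf M(\overline g)(V^{(0)})=\tilde{\mathfrak m}_H(\Sigma)$ together with monotonicity then forces $\tilde{\mathfrak m}_H(\Sigma_t)$ to be constant, so its derivative vanishes identically and each of the three nonnegative integrands is zero for all $t$: namely $\overline{R}\equiv-6$, $M_{ij}\equiv0$, and $\nabla w\equiv0$, i.e. $u=u(t)$. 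From $M_{ij}\equiv0$ the flow (\ref{RF}) is stationary, and since (\ref{RF}) converges to a constant-curvature metric the stationary metric must itself be of constant curvature; with area $4\pi$ this is the round unit sphere, so $\Sigma$ is the standard sphere. As $u$ depends only on $t$, the warped metric $\overline g=\tfrac{u(t)^2}{1+t^2}dt^2+t^2g$ is rotationally symmetric.

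Finally I would identify the model. With $g$ round, $M\equiv0$ and $\nabla w\equiv0$, the evolution equation above reduces to the linear ODE $w'=\tfrac{3t^2+1}{t(1+t^2)}(1-w)$, whose solution is $w=1-\tfrac{C}{t(1+t^2)}$. Then $u^2/(1+t^2)=\big(1+t^2-C/t\big)^{-1}$, so $\overline g=\tfrac{dt^2}{1+t^2-C/t}+t^2g$ is exactly the spatial anti--de Sitter Schwarzschild metric of mass $m=C/2$ (with $t$ the area radius), and one checks $\tilde{\mathfrak m}_H(\Sigma_t)\equiv C/2$. If $\tilde{\mathfrak m}_H(\Sigma)=0$ then $C=0$, $w\equiv1$, and $\overline g=\tfrac{dt^2}{1+t^2}+t^2g=dr^2+\sinh^2 r\,g$ after $t=\sinh r$, a region in hyperbolic space; if $\tilde{\mathfrak m}_H(\Sigma)=m>0$ then $C=2m$ and $\overline g$ is a region in AdS--Schwarzschild of mass $m$. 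I expect the main obstacle to be the monotonicity computation itself --- tracking the delicate cancellation of the $\Phi$-terms demands the exact evolution equation for $w$ and the pointwise volume-preservation of (\ref{RF}) --- together with the supporting identification $\lim_{t\to\infty}\tilde{\mathfrak m}_H(\Sigma_t)=\mathbf M(\overline g)(V^{(0)})$, which requires controlling how the flow foliation approaches the geodesic-sphere foliation at infinity.
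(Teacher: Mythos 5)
Your proposal is correct, and its core coincides with the paper's own argument: you derive the same formula $\tilde{\mathfrak{m}}_H(\Sigma_t)=\frac{t(1+t^2)}{2}\bigl(1-\frac{1}{4\pi}\int_\Sigma u^{-2}\,d\mu\bigr)$ (using that the flow (\ref{RF}) is trace-free, hence area-form preserving), differentiate it using the prescribed-scalar-curvature equation rewritten in $w=u^{-2}$, apply Gauss--Bonnet and integration by parts, and land on the same three-term nonnegative derivative --- your $\frac{1}{32\pi}\int_\Sigma |\nabla w|^2/w^2\,d\mu$ equals the paper's $\frac{1}{8\pi}\int_\Sigma|\nabla u|^2/u^2\,d\mu$, and the $(\overline{R}+6)$ and $|M|^2$ terms match term by term, so the monotonicity and the rigidity conclusions $\overline{R}\equiv-6$, $M\equiv 0$, $\nabla u\equiv 0$ are identical. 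Where you genuinely diverge is the final identification of the model: the paper at that point invokes the external rigidity theorem of Sakovich and Sormani (\cite[Theorem 3.3]{Sakovich:2017jo}) for rotationally symmetric asymptotically hyperbolic manifolds, whereas you note that rigidity reduces (\ref{after substitution}) to the ODE $w'=\frac{1+3t^2}{t(1+t^2)}(1-w)$, solve it explicitly as $w=1-\frac{C}{t(1+t^2)}$, and read off $\overline{g}=\bigl(1+t^2-\tfrac{C}{t}\bigr)^{-1}dt^2+t^2 g$ with $\tilde{\mathfrak{m}}_H(\Sigma_t)\equiv C/2$, which is hyperbolic space when $C=0$ and AdS--Schwarzschild of mass $m=C/2$ when $C=2m>0$. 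This is exactly the computation the paper performs separately in its Corollary for round initial data, so your route makes the rigidity case self-contained: it avoids the citation, produces the explicit metric, and directly ties the integration constant to the Hawking mass of the inner boundary; the paper's citation, in exchange, is shorter and does not require re-deriving the model form. One shared soft spot, which you rightly flag and the paper silently assumes, is the use of (\ref{hawking mass relation}): that identity is stated for geodesic spheres $S_r$, while the leaves $\Sigma_t$ are only asymptotic to them, so strictly speaking a justification that $\lim_{t\to\infty}\tilde{\mathfrak{m}}_H(\Sigma_t)=\mathbf{M}(\overline{g})(V^{(0)})$ along this foliation (using the decay from Theorem \ref{main theorem}) is needed in both treatments.
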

\subsection*{Acknowledgments} The author would like to express my gratitude to my advisor, Professor Lan-Hsuan Huang, for all her support, guidance, and motivation. The author is also grateful to Professors Xiadong Yan, Ovidiu Munteanu and Guozhen Lu for their helpful discussions. The author was partially supported by the NSF under grant DMS 1452477.

\section{Parabolic equation with Ricci flow foliation}\label{construction}
In this section, we will derive the equation for prescribed scalar curvature $\overline{R}$ on $N$ from (\ref{metric}) and obtain a priori estimates for a solution $u$. The argument is slightly modified from \cite{Bartnik:1993er} to be suitable for the derived equation. 

From the Gauss equation for each slice $\{t\}\times \Sigma$, we have
$$\overline{R}=R_t+2\overline{\text{Ric}}\left(\frac{\sqrt{1+t^2}}{u}\partial_t,\frac{\sqrt{1+t^2}}{u}\partial_t \right)+||h||^2-H^2,$$
where $R_t$ is the scalar curvature on $\{t\}\times\Sigma$ with the induced metric $t^2g(t)$, $h$ is the second fundamental form, and $H$ is the mean curvature in direction $\partial_t$. By direct computation, we have
\[\begin{split}\overline{\Gamma}_{ij}^{0}  =\frac{1}{2}\overline{g}^{0l}(\overline{g}_{lj,i}+\overline{g}_{il,j}-\overline{g}_{ij,l})&=\frac{1+t^{2}}{2u^{2}}\left(-\frac{\partial}{\partial t}(t^{2}g(t)_{ij})\right)\\
	& =\frac{1+t^{2}}{u^{2}}\left(-2tg_{ij}-2t^{2}M_{ij}\right),\\
	h_{ij}  =-\overline{g}\left(\overline{\nabla}_{\partial_{i}}\partial_{j},\frac{\sqrt{1+t^{2}}}{u}\partial_{0}\right)&=-\frac{\sqrt{1+t^{2}}}{u}\overline{g}(\overline{\Gamma}_{ij}^{0}\partial_{0},\partial_{0})\\
	& =\frac{\sqrt{1+t^{2}}}{2u}(2tg_{ij}+2t^{2}M_{ij}),\\\end{split}
\]\[\begin{split}
	H  &=\overline{g}^{ij}h_{ij}=t^{-2}g^{ij}\left(\frac{\sqrt{1+t^{2}}}{u}tg_{ij}\right)=\frac{2\sqrt{1+t^{2}}}{tu},\\
	||h||^{2} & =\frac{2(1+t^{2})}{t^{2}u^{2}}+\frac{1+t^{2}}{u^{2}}|M|_{g(t)}^{2}.
\end{split}
\]
Then, we obtain
\[\overline{\text{Ric}}\left(\frac{\sqrt{1+t^{2}}}{u}\partial_{t},\frac{\sqrt{1+t^{2}}}{u}\partial_{t}\right)=-\frac{1}{u}\Delta_{\overline{g}|_{\Sigma_{t}}}u+\frac{\sqrt{1+t^{2}}}{u}\frac{\partial H}{\partial t}-||h||^{2},\]
\[\begin{split}
h_{ij} & =\frac{\sqrt{1+t^{2}}}{u}(tg_{ij}+t^{2}M_{ij}),\\
H & =\frac{2\sqrt{1+t^{2}}}{tu},\quad ||h||^{2}=\frac{2(1+t^{2})}{t^{2}u^{2}}+\frac{1+t^{2}}{u^{2}}|M|_{g(t)}^{2}.
\end{split}
\]
Thus we get the following quasilinear second order parabolic equation
\begin{equation}\label{targetPDE}
\begin{split}
t(1+t^{2})\frac{\partial u}{\partial t}=\frac{u^{2}\Delta_{g(t)}u}{2}&-\frac{u^{3}}{4}(R_{g(t)}-t^{2}\overline{R})\\
&+u\left(\frac{1+3t^{2}}{2}+\frac{t^{2}(1+t^{2})|M|_{g(t)}^{2}}{4}\right).
\end{split}
\end{equation}
Here $\Delta_{g(t)}$ and $R_{g(t)}$ are the Laplace operator and the scalar curvature on $(\{t\}\times\Sigma,g(t))$ respectively. For any interval $I\subset\mathbb{R}^{+}$, let $A_{I}=I\times\Sigma$.
For sake of convenience, we will use the following notations: $\Delta=\Delta_{g(t)},$ $\nabla=\nabla^{g(t)}$, for any $f\in C^{0}(A_{I})$, $f^{*},f_{*}:I\rightarrow\mathbb{R}$ are defined by
\[f_{*}(t)=\inf\{f(t,x):x\in\Sigma\},\qquad f^{*}(t)=\sup\{f(t,x):x\in\Sigma\}.\]

Now from the parabolicity of (\ref{targetPDE}), the local existence can be obtained by standard Schauder theory \cite[Theorem 8.2]{Lieberman:1996ue}.

\begin{proposition}\label{proposition short time}
    Let $I=[t_{0},t_{1}],1\leq t_{0}<t_{1}<\infty$, and let $\overline{R}\in C^{\alpha,\alpha/2}(A_{I})$. Then for any initial condition 
	\begin{equation}\label{initial}
	u(t_{0},x)=\varphi(x),\quad x\in\Sigma,
	\end{equation}
	where $\varphi\in C^{2,\alpha}(\Sigma)$ satisfies 
	\begin{equation}\label{initialP}
	0<\delta_{0}\leq\varphi^{-2}(x)\leq\delta_{0}^{-1},\quad x\in\Sigma,
	\end{equation}
	for some constant $\delta_{0}>0$, the parabolic equation (\ref{targetPDE}) with the initial condition (\ref{initial}) has a unique solution $u\in C^{2+\alpha,1+\alpha/2}(A_{[t_{0},t_{0}+T]})$ for some $T>0$. Here $T$ depends on $\delta_{0},t_{0},||R||_{\alpha,\alpha/2;A_I}, ||M||_{\alpha,\alpha/2;A_I},$ $||\overline{R}||_{\alpha,\alpha/2;A_I}$ and $||\varphi||_{2,\alpha}$.
\end{proposition}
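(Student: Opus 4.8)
The plan is to recast (\ref{targetPDE}) as a standard quasilinear parabolic Cauchy problem and to obtain the solution by a fixed-point argument, which is exactly the abstract machinery packaged in \cite[Theorem 8.2]{Lieberman:1996ue}. First I would divide (\ref{targetPDE}) by the positive factor $t(1+t^{2})$ to bring it into the form
\[
\frac{\partial u}{\partial t}=a(t,u)\,\Delta_{g(t)}u+b(t,x,u),
\]
where the principal coefficient is
\[
a(t,u)=\frac{u^{2}}{2\,t(1+t^{2})}
\]
and $b$ collects the zeroth-order terms. Because $t_{0}\geq 1$, the weight $t(1+t^{2})$ is bounded between positive constants on $I$, so the time factor introduces no degeneracy; parabolicity is governed entirely by the sign of $a$, i.e.\ by $u$ staying away from $0$. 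The hypothesis (\ref{initialP}) is precisely what secures this at the initial time, since it forces $\delta_{0}^{1/2}\leq\varphi\leq\delta_{0}^{-1/2}$, so the operator is uniformly parabolic in a neighborhood of the initial data.

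Next I would set up the iteration. Fix a closed ball $\mathcal{B}\subset C^{2+\alpha,1+\alpha/2}(A_{[t_{0},t_{0}+T]})$ of functions $v$ with $v(t_{0},\cdot)=\varphi$ and $\|v-\varphi\|$ small enough that $v$ stays confined to $[\tfrac{1}{2}\delta_{0}^{1/2},\,2\delta_{0}^{-1/2}]$. For each such $v$ I freeze the nonlinearity by replacing $u$ with $v$ inside $a$ and $b$, obtaining a \emph{linear} uniformly parabolic equation whose coefficients lie in $C^{\alpha,\alpha/2}$; the latter regularity uses the smoothness in $(t,x)$ of $g(t)$, $R_{g(t)}$, $M_{ij}$ and $|M|^{2}_{g(t)}$ coming from the Ricci flow (\ref{RF}), together with the assumed regularity $\overline{R}\in C^{\alpha,\alpha/2}(A_{I})$ and $\varphi\in C^{2,\alpha}(\Sigma)$. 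Linear Schauder theory then furnishes a unique solution of the frozen problem, together with the a priori estimate bounding its $C^{2+\alpha,1+\alpha/2}$ norm by the data.

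I would then show that the solution map $v\mapsto u$ sends $\mathcal{B}$ into itself and is a contraction (in $C^{0}$, or a suitable weaker norm) once $T$ is small. The mechanism is that $u-\varphi$ vanishes at $t=t_{0}$, so the Schauder estimate bounds $\|u-\varphi\|$ by a constant times a positive power of $T$; shrinking $T$ both keeps the image inside $\mathcal{B}$ and drives the Lipschitz constant below $1$. The admissible $T$ and the contraction constant depend only on $\delta_{0}$, $t_{0}$, and the Hölder norms $\|R\|_{\alpha,\alpha/2;A_{I}}$, $\|M\|_{\alpha,\alpha/2;A_{I}}$, $\|\overline{R}\|_{\alpha,\alpha/2;A_{I}}$ and $\|\varphi\|_{2,\alpha}$, which is exactly the dependence asserted in the statement. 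The Banach fixed-point theorem then produces a unique $u\in\mathcal{B}$ solving (\ref{targetPDE}) with (\ref{initial}).

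The step I expect to be the main obstacle is the uniform parabolicity over the entire short interval: a priori the iterate could drift so that $u\to 0$ and the coefficient $a$ degenerates. This is \emph{not} handled here by a global maximum-principle bound (that comes later, via the substitution $w=u^{-2}$), but by the quantitative smallness of $T$, which confines $u$ to the interval $[\tfrac{1}{2}\delta_{0}^{1/2},\,2\delta_{0}^{-1/2}]$ through the continuity-in-time estimate for $u-\varphi$. Once this confinement and the $C^{\alpha,\alpha/2}$ regularity of the frozen coefficients are checked, existence, uniqueness, and the claimed dependence of $T$ all reduce to verifying the structural hypotheses of \cite[Theorem 8.2]{Lieberman:1996ue} rather than reproving the fixed-point scheme.
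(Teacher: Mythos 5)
Your proposal is correct and takes essentially the same route as the paper: the paper simply notes that (\ref{targetPDE}) is uniformly parabolic where $u>0$ (which (\ref{initialP}) guarantees near $t=t_{0}$) and invokes standard Schauder theory via \cite[Theorem 8.2]{Lieberman:1996ue}, which is exactly the frozen-coefficient, fixed-point machinery you unpack. Your elaboration of the contraction argument, the confinement of $u$ away from $0$ for small $T$, and the stated dependence of $T$ on the data is a faithful expansion of what the cited theorem packages.
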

To state the existence of global solution, we need the following a priori $C^0$ estimates for the solution $u$ which control the parabolicity and prevent the finite-time blow up.
\begin{proposition}\label{proposition for delta}
	Suppose $u\in C^{2+\alpha,1+\alpha/2}(A_{[t_{0},t_{1}]}),1\leq t_{0}<t_{1}$, is a positive solution to (\ref{initial}). If we further assume that $\overline{R}$ is defined on $A_{[1,\infty)}$ such that the functions 
	\begin{equation}\label{lower delta}
	\delta_{*}(t)=\frac{1}{t(1+t^{2})}\int_{1}^{t}\frac{\left(R_{g(s)}-s^{2}\overline{R}\right)_{*}}{2}\exp\left(-\int_{s}^{t}\frac{\tau(|M|^{*})^{2}}{2}d\tau\right)ds
	\end{equation}
	and 
	\begin{equation}\label{upper delta}
	\delta^{*}(t)=\frac{1}{t(1+t^{2})}\int_{1}^{t}\frac{\left(R_{g(s)}-s^{2}\overline{R}\right)^{*}}{2}\exp\left(-\int_{s}^{t}\frac{\tau(|M|_{*})^{2}}{2}d\tau\right)ds
	\end{equation}
	are defined and finite for all $t\in[t_{0},\infty)$, then for $t_{0}\leq t\leq t_{1}$, we have
	\begin{equation}\label{lower bound}
	u^{-2}(t,x)\geq\delta_{*}(t)+\frac{t_{0}(1+t_{0}^{2})}{t(1+t^{2})}(u^{*}(t_{0})^{-2}-\delta_{*}(t_{0}))\exp\left(-\int_{t_{0}}^{t}\frac{\tau(|M|^{*})^{2}}{2}d\tau\right)
	\end{equation}
	and 
	\begin{equation}\label{upper bound}
	u^{-2}(t,x)\leq\delta^{*}(t)+\frac{t_{0}(1+t_0^{2})}{t(1+t^{2})}(u_{*}(t_{0})^{-2}-\delta^{*}(t_{0}))\exp\left(-\int_{t_{0}}^{t}\frac{\tau(|M|_{*})^{2}}{2}d\tau\right).
	\end{equation}
\end{proposition}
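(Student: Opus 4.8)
The plan is to carry out exactly the substitution advertised in the introduction and thereby reduce the fully nonlinear comparison to a scalar linear ODE solved by an explicit integrating factor. Setting $w=u^{-2}$ (so $w>0$ since $u>0$) and multiplying (\ref{targetPDE}) by $-2u^{-3}$, a direct computation using $\Delta u=-\tfrac12 w^{-3/2}\Delta w+\tfrac34 w^{-5/2}|\nabla w|^{2}$ turns the equation into
\begin{equation}\label{wPDE}
\begin{split}
t(1+t^{2})\,\partial_t w={}&\tfrac12 w^{-1}\Delta w-\tfrac34 w^{-2}|\nabla w|^{2}+\tfrac12\!\left(R_{g(t)}-t^{2}\overline{R}\right)\\
&-(1+3t^{2})w-\tfrac{t^{2}(1+t^{2})}{2}|M|^{2}w.
\end{split}
\end{equation}
The essential gain is that the zeroth-order and source terms are now \emph{linear} in $w$, the leading coefficient $\tfrac12 w^{-1}$ is positive, and the only genuinely nonlinear piece $-\tfrac34 w^{-2}|\nabla w|^{2}$ carries a gradient factor that will vanish at every spatial extremum.

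First I would establish the lower bound. Write $w_*(t)=\inf_{x\in\Sigma}w(t,x)=u^*(t)^{-2}$, which is Lipschitz in $t$. By Hamilton's trick, for a.e.\ $t$ there is a minimizing point $x_0(t)$ with $\tfrac{d}{dt}w_*(t)=\partial_t w(t,x_0(t))$, and at $x_0(t)$ one has $\nabla w=0$ and $\Delta w\geq0$. Evaluating (\ref{wPDE}) there, the second-order term is nonnegative and the gradient term vanishes; dropping them and estimating $\tfrac12(R_{g(t)}-t^2\overline{R})\geq\tfrac12(R_{g(t)}-t^2\overline{R})_*$ together with $-\tfrac{t^2(1+t^2)}{2}|M|^2 w_*\geq-\tfrac{t^2(1+t^2)}{2}(|M|^{*})^2 w_*$ (valid since $w_*>0$) yields the differential inequality
\begin{equation}\label{wODE}
\frac{dw_*}{dt}+\left[\frac{1+3t^{2}}{t(1+t^{2})}+\frac{t(|M|^{*})^{2}}{2}\right]w_*\geq\frac{\left(R_{g(t)}-t^{2}\overline{R}\right)_*}{2t(1+t^{2})}.
\end{equation}

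The key computation is that the integrating factor for (\ref{wODE}) is explicit: by partial fractions $\frac{1+3t^2}{t(1+t^2)}=\frac1t+\frac{2t}{1+t^2}$, so $\int\frac{1+3t^2}{t(1+t^2)}\,dt=\ln\!\big(t(1+t^2)\big)$ and the integrating factor is $\mu(t)=\frac{t(1+t^2)}{t_0(1+t_0^2)}\exp\!\big(\int_{t_0}^t\tfrac{\tau(|M|^*)^2}{2}\,d\tau\big)$. This is precisely where the prefactor $t(1+t^2)$ in (\ref{lower delta})--(\ref{upper delta}) originates. Multiplying (\ref{wODE}) by $\mu$, integrating from $t_0$ to $t$, using $\mu(t_0)=1$ and $w_*(t_0)=u^*(t_0)^{-2}$, and then splitting $\int_1^t=\int_1^{t_0}+\int_{t_0}^t$ to convert the boundary contribution into a $\delta_*(t_0)$ term gives precisely (\ref{lower bound}). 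The upper bound (\ref{upper bound}) is the mirror image: one runs the same argument for $w^*(t)=\sup_x w(t,x)=u_*(t)^{-2}$, where now $\Delta w\leq0$ forces the second-order term to be nonpositive and the extremal estimates $(R_{g(t)}-t^2\overline{R})^*$ and $(|M|_*)^2$ replace their counterparts, reversing every inequality.

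The main obstacle is not any single estimate but the bookkeeping around two points. Analytically, I must justify applying Hamilton's trick to the merely Lipschitz envelopes $w_*,w^*$, so that the pointwise inequality at the extremal point integrates to a genuine comparison on $[t_0,t_1]$. Organizationally, I must reconcile the two integration ranges: the ODE (\ref{wODE}) is solved from the initial time $t_0$, whereas $\delta_*,\delta^*$ in (\ref{lower delta})--(\ref{upper delta}) integrate from $1$. Splitting $\int_1^t=\int_1^{t_0}+\int_{t_0}^t$ and using $\exp(-\int_s^t)=\exp(-\int_s^{t_0})\exp(-\int_{t_0}^t)$ shows that the $\int_1^{t_0}$ remainder recombines with the homogeneous solution to produce exactly the factor $u^*(t_0)^{-2}-\delta_*(t_0)$ (resp.\ $u_*(t_0)^{-2}-\delta^*(t_0)$); the finiteness hypothesis on $\delta_*,\delta^*$ guarantees these are well-defined and the bounds nonvacuous.
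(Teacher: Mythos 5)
Your proposal is correct and follows essentially the same route as the paper: the substitution $w=u^{-2}$, the maximum principle applied at spatial extrema of $w$ (where the gradient term vanishes and the Laplacian term has a sign), and the explicit integrating factor $t(1+t^{2})\exp\bigl(\int \tfrac{\tau(|M|^{*})^{2}}{2}\,d\tau\bigr)$ followed by splitting $\int_{1}^{t}=\int_{1}^{t_{0}}+\int_{t_{0}}^{t}$ to produce the $\delta_{*}(t_{0})$ and $\delta^{*}(t_{0})$ terms. The only differences are cosmetic: you write the gradient term explicitly in $w$ and invoke Hamilton's trick by name to justify differentiating the Lipschitz envelopes, a point the paper subsumes under "applying the maximum principle."
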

\begin{proof}
	Let $w=u^{-2}$ then we have
	\[\Delta u 
	=-\frac{3}{2}w^{-1}\nabla u\cdot\nabla w-\frac{1}{2}w^{-\frac{3}{2}}\Delta w.
	\]
	Substituting the Laplace term in (\ref{targetPDE}), we obtain
	\begin{equation}\label{after substitution}
	\begin{split}
	\frac{\partial w}{\partial t} =\frac{1}{t(1+t^{2})}\left[\frac{3}{2}u\nabla u\cdot\nabla w+\frac{1}{2w}\right.&\Delta w+\frac{1}{2}(R_{g(t)}-t{}^{2}\overline{R})\\
	&\left.-w\left(1+3t^{2}+\frac{t^{2}(1+t^{2})|M|^{2}}{2}\right)\right].
	\end{split}
	\end{equation}
	By applying the maximum principle, we have
	\[t\frac{dw_{*}}{dt}\geq\frac{1}{1+t^{2}}\left(-w_{*}\left(1+3t^{2}+\frac{t^{2}(1+t^{2})\left(|M|^{*}\right)^{2}}{2}\right)+\frac{1}{2}(R_{g(t)}-t{}^{2}\overline{R})_{*}\right)\] at the maximum of $u(t,x)$. We solve the following ODE
	\begin{equation}\label{ODE maximum principle}
	t\frac{dw_{*}}{dt}=-w_{*}\left(1+t\left(\frac{2t}{1+t^{2}}+\frac{t\left(|M|^{*}\right)^{2}}{2}\right)\right)+\frac{(R_{g(t)}-t^{2}\overline{R})_{*}}{2(1+t)^{2}}.
	\end{equation}
	Using the integrating factor method, we let \[\varphi(t)=\exp\left(\int_{1}^{t}\frac{2s}{1+s^{2}}+\frac{s\left(|M|^{*}\right)^{2}}{2}ds\right).\] Then we have
	\[\frac{d(t\varphi(t)w_{*}(t))}{dt}=\frac{(R_{g(t)}-t^{2}\overline{R})_{*}}{2(1+t^{2})}\varphi(t).\] Integrating to solve $t\varphi(t)w_*(t)$ and noting $u^{-2}(t,x)=w(t,x)\geq w_*(t)$, we derive
	\[\begin{split}u^{-2} & \geq\frac{1}{t}\int_{t_{0}}^{t}\frac{\left(R_{g(s)}-s^{2}\overline{R}\right)_{*}}{2(1+s^{2})}\,\exp\left(-\int_{s}^{t}\left(\frac{2\tau}{1+\tau^{2}}+\frac{\tau(|M|^{*})^{2}}{2}\right)d\tau\right)ds\\
	&\qquad\qquad\qquad\qquad\qquad +\frac{t_{0}}{t}\exp\left(-\int_{t_{0}}^{t}\left(\frac{2\tau}{1+\tau^{2}}+\frac{\tau(|M|^{*})^{2}}{2}\right)d\tau\right)w_{*}(t_{0})\\
	&=\delta_{*}(t)+\frac{t_{0}(1+t_0^2)}{t(1+t^2)}(w_{*}(t_{0})-\delta_{*}(t_{0}))\exp\left(-\int_{t_{0}}^{t}\left(\frac{\tau(|M|^{*})^{2}}{2}\right)d\tau\right).
	\end{split}\]
	Similarly, applying the maximum principle to $w^{*}$, we get the upper bound of $u^{-2}$.
\end{proof}
\begin{remark}\label{substitution remark}
	In proof of Proposition \ref{proposition for delta}, the idea of substitution as $w=u^{-2}$ first appeared in Bartnik's work \cite[Proposition 3.3]{Bartnik:1993er}. The advantage of this is that we can simplify the coefficient of the term $R_{g(t)}-t^2\overline{R}$ as in (\ref{after substitution}) so that we can find the explicit solution of the equation (\ref{ODE maximum principle}) when applying the maximum principle. This will be used not only to prove the global existence of solution but also to show that $\overline{g}=\frac{u^2}{1+t^2}dt^2+t^2g(t)$ is asymptotically hyperbolic with a certain initial condition on $\overline{R}$. 
\end{remark}
	
With Propositions \ref{proposition short time} and \ref{proposition for delta}, we can prove the global existence of solution as the following.
\begin{theorem}\label{global existence theorem}
	Assume that $\overline{R}\in C^{\alpha,\alpha/2}(N)$ and the constant $K$ is defined by
	\begin{equation}\label{constant K}
	\begin{split}K =\sup_{1\leq t<\infty}\left\{ -\int_{1}^{t}\frac{\left(R_{g(s)}-s^{2}\overline{R}\right)_{*}}{4}\exp\left(\int_{1}^{s}\frac{\tau(|M|^{*})^{2}}{2}d\tau\right)ds\right\} <\infty.
	\end{split}
	\end{equation}
	Then for every $\varphi\in C^{2,\alpha}(\Sigma)$ such that 
	\begin{equation}\label{global condition}
	0<\varphi(x)<\frac{1}{\sqrt{K}}\text{ for all }x\in\Sigma,
	\end{equation}
	there is a unique positive solution $u\in C^{2+\alpha,1+\alpha/2}(N)$ with the initial condition 
	\begin{equation}\label{target PDE initial}
	u(1,\cdot)=\varphi(\cdot).
	\end{equation}
\end{theorem}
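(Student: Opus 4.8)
The plan is to run a standard continuation argument for the quasilinear parabolic equation (\ref{targetPDE}), using Proposition \ref{proposition short time} to start and re-start the solution and Proposition \ref{proposition for delta} to produce the a priori $C^{0}$ bounds that keep the equation uniformly parabolic. Let $[1,T^{*})$ be the maximal interval on which a solution $u\in C^{2+\alpha,1+\alpha/2}$ with $u(1,\cdot)=\varphi$ exists, obtained by iterating Proposition \ref{proposition short time}, and suppose for contradiction that $T^{*}<\infty$. On the compact interval $[1,T^{*}]$ the curvature quantities $R_{g(t)}$, $|M|^{2}$ and $\overline R$ are continuous and bounded, and since Hamilton's flow converges exponentially the weight $\tau(|M|^{*})^{2}$ is integrable; hence $\delta_{*}(t)$ and $\delta^{*}(t)$ are defined and finite, so Proposition \ref{proposition for delta} applies with $t_{0}=1$.

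First I would extract an upper bound for $u$ from the lower bound (\ref{lower bound}) for $w:=u^{-2}$. Taking $t_{0}=1$, using $\delta_{*}(1)=0$ and factoring the exponential, (\ref{lower bound}) can be rewritten as
\[
u^{-2}(t,x)\ge \frac{1}{t(1+t^{2})}\exp\!\left(-\int_{1}^{t}\frac{\tau(|M|^{*})^{2}}{2}\,d\tau\right)\!\left[\int_{1}^{t}\frac{(R_{g(s)}-s^{2}\overline R)_{*}}{2}\exp\!\left(\int_{1}^{s}\frac{\tau(|M|^{*})^{2}}{2}\,d\tau\right)ds+2w_{*}(1)\right].
\]
By the definition (\ref{constant K}) of $K$ the bracketed integral is bounded below by $-2K$, while the hypothesis (\ref{global condition}) gives $w_{*}(1)=(\sup_{\Sigma}\varphi)^{-2}>K$; therefore the bracket is at least $2(w_{*}(1)-K)>0$. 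Since the prefactor is bounded below by a positive constant on the compact interval $[1,T^{*}]$, this yields $u^{-2}\ge c_{1}>0$, i.e.\ a uniform upper bound $u\le C_{1}$. Symmetrically, the upper bound (\ref{upper bound}) together with the finiteness of $\delta^{*}$ and of $w^{*}(1)=(\inf_{\Sigma}\varphi)^{-2}$ gives $u^{-2}\le C_{2}$ on $[1,T^{*}]$, i.e.\ a uniform lower bound $u\ge c_{2}>0$.

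These two-sided bounds $0<c_{2}\le u\le C_{1}$ make the leading coefficient $u^{2}/2$ bounded away from $0$ and $\infty$, so (\ref{targetPDE}) is uniformly parabolic on the relevant range, with lower-order coefficients controlled in $C^{\alpha,\alpha/2}([1,T^{*}]\times\Sigma)$. Interior and initial Schauder estimates for quasilinear parabolic equations then upgrade the $C^{0}$ bounds to uniform $C^{2+\alpha,1+\alpha/2}$ bounds for $u$ on $[1,T^{*}]$. In particular $\delta_{0}$ and $\|u(t_{0},\cdot)\|_{2,\alpha}$ are controlled uniformly for $t_{0}\in[1,T^{*})$, so the existence time furnished by Proposition \ref{proposition short time} when restarting at $t_{0}$ is bounded below independently of $t_{0}$; this lets us continue the solution beyond $T^{*}$, contradicting maximality. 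Hence $T^{*}=\infty$ and $u\in C^{2+\alpha,1+\alpha/2}(N)$, with uniqueness inherited from Proposition \ref{proposition short time} on each finite subinterval.

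I would expect the main obstacle to be twofold. Analytically, the crux is the lower bound on $w=u^{-2}$: one must recognize that the constant $K$ in (\ref{constant K}) and the smallness hypothesis (\ref{global condition}) are precisely calibrated so that the bracket above stays strictly positive for all $t$, which is exactly what prevents $u$ from blowing up in finite time, the cubic term $-\tfrac{u^{3}}{4}(R_{g(t)}-t^{2}\overline R)$ being the potential source of blow-up. Technically, the delicate point is passing from the $C^{0}$ bounds to uniform $C^{2+\alpha}$ bounds while tracking the uniform parabolicity constant, so that the local existence time does not shrink to zero as $t_{0}\to T^{*}$.
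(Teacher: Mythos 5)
Your proposal is correct and follows essentially the same route as the paper: apply the lower bound (\ref{lower bound}) of Proposition \ref{proposition for delta} with $t_{0}=1$, factor out the exponential so that the definition (\ref{constant K}) of $K$ together with $w_{*}(1)=(\sup_\Sigma\varphi)^{-2}>K$ from (\ref{global condition}) forces $u^{-2}$ to stay strictly positive, and then combine this with the local existence of Proposition \ref{proposition short time} to continue the solution for all $t\geq 1$. The paper states this more tersely (it does not spell out the continuation argument, the complementary bound from (\ref{upper bound}), or the Schauder upgrade), but your added details are consistent with, and fill in, the same argument.
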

\begin{proof}
	By considering (\ref{lower bound}) and (\ref{constant K}) simultaneously, we have
	\[\begin{split}
	\left(u^{-2}\right)_*(t)&>\delta_{*}(t)+\frac{2K}{t(1+t^2)}\exp\left(-\int_{1}^t\frac{\tau\left(|M|^*\right)^2}{2}d\tau\right)\\
	&\geq\frac{2}{t(1+t^{2})}\exp\left(-\int_{1}^t\frac{\tau\left(|M|^*\right)^2}{2}d\tau\right)\\
	&\hspace{1in}\times\left(\int_{1}^{t}\frac{\left(R_{g(s)}-s^{2}\overline{R}\right)_{*}}{4}\exp\left(\int_{1}^{s}\frac{\tau(|M|^{*})^{2}}{2}d\tau\right)ds+K\right)\\
	&\geq 0
	\end{split}
	\]
	for all $t\geq 1$. Hence it follows from Proposition \ref{proposition for delta} that $u$ doesn't blow up for all $t\geq 1$. Combining this and Proposition \ref{proposition short time} which states the local existence, we get the desired result.
\end{proof}

\section{Asymptotically hyperbolic $3$-metric with Ricci flow foliation}\label{metric section}
Using Theorem \ref{global existence theorem}, we can construct a metric with prescribed scalar curvature $\overline{R}$ along the Ricci flow foliation. By assuming the approximate decay for $\overline{R}$, we prove that the metric is asymptotically hyperbolic.

\begin{theorem}\label{main theorem}
	Let $(\Sigma,g)$ be a $2$-manifold which is diffeomorphic to $\mathbb{S}^2$ with area $4\pi$. Let $N$ be the product manifold $[1,\infty)\times\Sigma$. Assume that $\overline{R}\in C^{\infty}(N)$ satisfies
	\begin{equation}\label{source condition}
	\overline{R}=-6+O(t^{-5})\geq -6. 
	\end{equation}
	Then for any $H\in C^{\infty}(\Sigma)$ with the condition
	\begin{equation}\label{extension condition}
	H>2\sqrt{2K}
	\end{equation}
	where $K$ is defined as (\ref{constant K}), there exists an asymptotically hyperbolic $3$-metric on $N$ of the form
	\begin{equation}
	\overline{g}=\frac{u^2}{1+t^2}dt^2+t^2g(t).
	\end{equation}
	Here $g(t)$ is the solution to Hamilton's modified Ricci flow (\ref{RF}), such that $\overline{R}$ and $H$ are the scalar curvature on $(N,\overline{g})$ and the mean curvature in direction $\partial_{t}$ on $\{1\}\times\Sigma$, respectively.
\end{theorem}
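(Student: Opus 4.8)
The plan is to feed Theorem \ref{global existence theorem} the initial datum dictated by the prescribed mean curvature, and then to extract the precise asymptotics of the solution from the sharp two-sided bounds of Proposition \ref{proposition for delta} after passing to a geodesic coordinate adapted to $\mathbb{H}^3$. The first thing to check is that $K$ in (\ref{constant K}) is finite. Since Hamilton's modified flow (\ref{RF}) converges to a round metric exponentially fast in every $C^k$-norm, both $R_{g(s)}-2$ and $|M|^2$ decay like $e^{-cs}$; in particular $\int_1^\infty \tfrac{\tau(|M|^*)^2}{2}\,d\tau<\infty$, so the exponential weight in (\ref{constant K}) is bounded on $[1,\infty)$. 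Together with $-s^2\overline R=6s^2+O(s^{-3})$, which comes from the hypothesis $\overline R=-6+O(s^{-5})\geq-6$, the weighted integrand $\tfrac14(R_{g(s)}-s^2\overline R)_*\exp(\cdots)$ is positive and of order $s^2$ for large $s$, so $t\mapsto-\int_1^t(\cdots)\,ds$ is eventually decreasing to $-\infty$; being continuous and vanishing at $t=1$, it attains a finite supremum, i.e. $0\le K<\infty$.

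Next I would choose the initial condition. The computation in Section \ref{construction} gives $H=\frac{2\sqrt{1+t^2}}{tu}$, so prescribing the mean curvature on $\{1\}\times\Sigma$ forces $u(1,\cdot)=\varphi:=2\sqrt2/H$. The hypothesis (\ref{extension condition}), $H>2\sqrt{2K}$, is exactly $0<\varphi<1/\sqrt K$, which is condition (\ref{global condition}); hence Theorem \ref{global existence theorem} produces a unique positive global solution $u\in C^{2+\alpha,1+\alpha/2}(N)$, which a standard parabolic bootstrap upgrades to $u\in C^\infty$ on $\{t>1\}$ because $g(t),R_{g(t)},|M|^2$ and $\overline R$ are all smooth there. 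By construction $(N,\overline g)$ has scalar curvature $\overline R$, and $\{1\}\times\Sigma$ has mean curvature $H$ in the direction $\partial_t$, so only Definition \ref{defn AH} remains.

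To verify asymptotic hyperbolicity I would set $t=\sinh r$, which turns the metric into $\overline g=u^2\,dr^2+\sinh^2r\,g(\sinh r)$, to be compared with $g_0=dr^2+\sinh^2r\,h_0$ in the frame $\{\varepsilon_i\}$; here I identify the exponential limit of the flow with $(\mathbb S^2,h_0)$, both being round of curvature $1$ and area $4\pi$. In this frame $g_{00}-1=u^2-1$, $g_{0\alpha}=0$, and $g_{\alpha\beta}-\delta_{\alpha\beta}=(g(\sinh r)-h_0)(f_\alpha,f_\beta)$, whose angular part decays super-exponentially in $r$ by convergence of the flow, so everything reduces to $u$. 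Expanding (\ref{lower delta})--(\ref{upper delta}), the integrand $\tfrac12(R_{g(s)}-s^2\overline R)^{*}\approx 3s^2$ integrates to $t^3+O(1)$, and division by $t(1+t^2)$ yields $\delta_*,\delta^*=1+O(t^{-3})$; combined with the $O(t^{-3})$ correction terms in (\ref{lower bound})--(\ref{upper bound}) (with $t_0=1$), this sandwiches $u^{-2}=1+O(t^{-3})$, hence $u=1+O(t^{-3})=1+O(e^{-3r})$, and the $C^0$ part of Definition \ref{defn AH} holds with $\tau=3>\tfrac32$.

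The hard part will be the first- and second-derivative bounds $|\varepsilon_k(g_{ij})|,|\varepsilon_k\varepsilon_l(g_{ij})|=O(e^{-\tau r})$. I would not differentiate the crude estimate $u=1+O(t^{-3})$, but rather exploit the structure of (\ref{targetPDE}): the two $O(t^2)$ terms, from $-\tfrac{u^3}{4}(R_{g(t)}-t^2\overline R)$ and from $\tfrac{3t^2}{2}u$, cancel to leading order precisely because $u\equiv1$ is the exact hyperbolic profile, leaving $t(1+t^2)\partial_t u=\tfrac12u^2\Delta_{g(t)}u+O(t^{-1})$ once the smallness of $u-1$, of $\overline R+6$, and of $R_{g(t)}-2$ and $|M|^2$ is inserted. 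Controlling $\partial_t u$ then reduces to controlling the spatial Laplacian $\Delta_{g(t)}u$, for which I would run interior parabolic Schauder estimates on the rescaled cylinders $[t/2,t]\times\Sigma$, feeding in the uniform $C^0$ decay and the smooth, exponentially stabilizing coefficients to obtain $\|u-1\|_{C^{2+\alpha}}=O(t^{-3})$ and $\partial_t u=O(t^{-4})$. Converting through $\partial_r=\cosh r\,\partial_t$ and $\varepsilon_\alpha=\tfrac1{\sinh r}f_\alpha$ then yields the required $O(e^{-\tau r})$ decay of all frame derivatives of $g_{ij}$. I expect the uniformity of the rescaled Schauder constants in $t$, together with showing that the spatial oscillation inherited from the non-constant datum $\varphi=2\sqrt2/H$ is damped at the rate $O(t^{-3})$, to be the principal technical obstacle.
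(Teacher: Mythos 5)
Your proposal is correct, and its skeleton is the same as the paper's: the choice $\varphi=2\sqrt{2}/H$ (so that (\ref{extension condition}) is literally (\ref{global condition})), global existence from Theorem \ref{global existence theorem}, the $C^0$ decay $u=1+O(t^{-3})$ extracted from $\delta_*,\delta^*$ in Proposition \ref{proposition for delta} together with the exponential convergence of the flow, and parabolic Schauder theory for the derivative decay --- this last block is exactly the content of the paper's Lemma \ref{decay Lemma}, including the cancellation of the two $O(t^2)$ terms that you isolate. You diverge at two points, and the comparison is instructive. (i) For derivative estimates the paper does not use dyadic cylinders $[t/2,t]\times\Sigma$; it compactifies time once via $s=\log\bigl(t/\sqrt{1+t^2}\bigr)+1$, which maps $[1,\infty)$ onto the finite interval $[1-\tfrac12\log 2,1)$, writes (\ref{targetPDE}) in divergence form, obtains uniform H\"older bounds from Ladyzhenskaya--Solonnikov--Uraltseva, and then applies linear Schauder plus bootstrap to $v=u-1$, whose source is $O(t^{-3})$. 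Your dyadic cylinders are exactly these shrinking $s$-cylinders in disguise, and this is where your claimed sharp rates $\|u-1\|_{C^{2+\alpha}}=O(t^{-3})$ and $\partial_t u=O(t^{-4})$ are vulnerable: interior Schauder constants degenerate as the temporal thickness of the cylinder shrinks, so a naive application loses powers of $t$. This is a soft spot, not a fatal gap: uniform $C^{2,\alpha}$ bounds (which the compactified picture gives for free), the $C^0$ decay, interpolation, and the equation already yield, e.g., $\partial_t u=O(t^{-3})$ and $D_x u=O(t^{-3/2})$, and these lossy rates still give $|\varepsilon_k(g_{ij})|,|\varepsilon_k\varepsilon_l(g_{ij})|=O(e^{-\tau r})$ for some $\tau\geq 2>\tfrac32$, which is all Definition \ref{defn AH} requires. (ii) At the end you verify Definition \ref{defn AH} directly through $t=\sinh r$ and the orthonormal frame, with explicit bookkeeping of which decay of $u$ each frame derivative needs; the paper instead writes the expansion $\overline{g}_{tt}=t^{-2}-t^{-4}+\overline{g}_{tt}^{(-5)}t^{-5}+\cdots$, $\overline{g}_{ij}=t^2\sigma_{ij}+O(e^{-ct})$, and invokes the definition of asymptotic hyperbolicity from \cite{Chen:2016hm}. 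Your route is more self-contained (it uses only the definition stated in the paper) and arguably more careful, since an expansion with smooth coefficients $\overline{g}_{tt}^{(-5)},\overline{g}_{tt}^{(-6)}$ is more than the $O(t^{-3})$ bounds of Lemma \ref{decay Lemma} literally furnish; the price is the frame computation you sketch. Your explicit verification that $0\leq K<\infty$ is a small bonus the paper leaves implicit.
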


To prove the theorem we need the following lemma which investigates the decay of $u$ by the assumption (\ref{source condition}). The method is similar to the proof of Y. Shi and L. -F. Tam in \cite{Shi:2002dv}.

\begin{lemma}\label{decay Lemma}
	Let $u$ be the solution of (\ref{targetPDE}) with the initial condition $u(1,x)=\varphi(x)$, where $\varphi(x)$ satisfies (\ref{global condition}). Then for sufficiently large $t$, we have the estimate 
	\begin{equation}
	\begin{split}
	\left|\left(\frac{\partial}{\partial t}\right)^k\left(\frac{\partial^{|\beta|}}{\partial x^\beta}\right)(u-1)\right|\leq \frac{C}{t^3}
	\end{split}
	\end{equation}
	where $\beta$ is a multi-index.
\end{lemma}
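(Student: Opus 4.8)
The plan is to establish the decay estimate in two stages: first obtain the $C^0$ decay $|u-1| \leq C/t^3$ from the a priori bounds of Proposition \ref{proposition for delta} combined with the hypothesis (\ref{source condition}), and then bootstrap to the derivative estimates using interior parabolic Schauder theory on the equation (\ref{targetPDE}). The key input is that $\overline{R} = -6 + O(t^{-5})$ and that the modified Ricci flow $g(t)$ converges exponentially to the round metric, so that $R_{g(t)} \to 2$, $M_{ij} \to 0$, and hence $|M|_{g(t)}^2 \to 0$, all exponentially fast in $t$. In particular the integrals $\int_s^t \tau(|M|^*)^2/2\,d\tau$ appearing in (\ref{lower bound}) and (\ref{upper bound}) are uniformly bounded, which makes the exponential factors $O(1)$ from above and below.

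First I would examine $w = u^{-2}$. The strategy is to show $w \to 1$ at the correct rate. With $\overline{R} = -6 + O(t^{-5})$ we have $R_{g(s)} - s^2\overline{R} = R_{g(s)} + 6s^2 + O(s^{-3})$, so for large $s$ the dominant term is $6s^2$. Plugging this into $\delta_*(t)$ and $\delta^*(t)$ from (\ref{lower delta})--(\ref{upper delta}), I would evaluate the leading behavior: the factor $\frac{1}{t(1+t^2)} \int_1^t \frac{6s^2}{2}\,ds \sim \frac{1}{t^3} \cdot s^3\big|^t = 1$ to leading order, so $\delta_*(t), \delta^*(t) \to 1$. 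I then need the sharper statement that $\delta^*(t) - \delta_*(t) = O(t^{-3})$ and that the difference $\delta^*(t) - 1$ is itself $O(t^{-3})$; this is where the $O(t^{-5})$ decay of $\overline{R}$ and the exponential decay of $R_{g(s)} - 2$ and $|M|^2$ are used to control all the correction terms. Squeezing $w$ between the lower and upper bounds (\ref{lower bound}), (\ref{upper bound}) and noting the second terms there decay like $t^{-3}$ as well (since $t_0(1+t_0^2)/(t(1+t^2)) \sim t_0^3/t^3$), I would conclude $|w - 1| \leq C/t^3$, and hence $|u-1| \leq C/t^3$ since $u = w^{-1/2}$ is bounded above and below.

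For the derivatives, I would follow the standard parabolic bootstrap as in Shi--Tam \cite{Shi:2002dv}. The idea is to rescale: on a parabolic cylinder near a point $(t, x)$ with $t$ large, introduce a dilated time variable so the equation (\ref{targetPDE}) becomes uniformly parabolic with coefficients controlled in $C^{\alpha, \alpha/2}$ independently of $t$ (using the exponential convergence of $g(t)$ to fix the leading geometry). Interior Schauder estimates then bound the rescaled $C^{2+\alpha, 1+\alpha/2}$ norm of $u - 1$ by its $C^0$ norm on a slightly larger cylinder, which is $O(t^{-3})$ by the first stage; unwinding the scaling gives $|\partial_t(u-1)|, |\partial_x^\beta(u-1)| \leq C t^{-3}$. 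Iterating this argument on the differentiated equations yields the estimate for all orders $k$ and all multi-indices $\beta$.

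The main obstacle I anticipate is the first stage: extracting the precise $O(t^{-3})$ rate for $|w-1|$ rather than merely $w \to 1$. The delicate point is that $\delta_*(t)$ and $\delta^*(t)$ are each built from an integral whose integrand grows like $s^2$, divided by $t(1+t^2)\sim t^3$, so the leading terms conspire to give exactly $1$, and the decay rate comes only from the subleading pieces. I must track carefully that the $O(s^{-3})$ error from $\overline{R}$, after integration against the bounded exponential factor and division by $t^3$, contributes $O(t^{-3})$ rather than something slower, and that the exponentially small corrections from $R_{g(s)} - 2$ and $|M|^2$ do not spoil this. Getting the constant and the exponent to line up — and verifying that the choice of initial data via (\ref{extension condition}), which guarantees positivity of $u$ through Theorem \ref{global existence theorem}, is consistent with the asymptotic normalization $u \to 1$ — is the crux of the estimate.
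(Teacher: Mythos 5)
Your first stage (the $C^0$ bound) is essentially the paper's own argument: squeeze $u^{-2}$ between $\delta_*$ and $\delta^*$ from Proposition \ref{proposition for delta}, keep the constant part of $R_{g(s)}$ together with the $6s^2$ coming from $-s^2\overline{R}$ so that the exact cancellation $\tfrac12\int_1^t(2+6s^2)\,ds=t(1+t^2)-2$ produces the $O(t^{-3})$ rate, and control the remaining terms using the exponential convergence of the modified Ricci flow. You correctly flag that this cancellation is delicate; the paper implements it with Bartnik's Lemma 4.1 and the inequality $e^{\eta}-1\geq -2|\eta|$, but the plan is the same and is sound.

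The genuine gap is in your second stage. The rescaling-plus-interior-Schauder mechanism cannot produce the rate $t^{-3}$ for \emph{spatial} derivatives, because relative to $t$-time the diffusion coefficient of (\ref{targetPDE}) is $\frac{u^2}{2t(1+t^2)}\sim t^{-3}$: the parabolic scale at time $t$ is spatial radius $\sim t^{-1}$, so when you unwind the scaling each spatial derivative costs a factor of $t$, and interior estimates only yield $|\partial_x^{\beta}(u-1)|\lesssim t^{-3+|\beta|}$. If instead you take cylinders large enough that the Schauder constants are uniform --- i.e.\ of fixed size in the compactified time $s=\log(t/\sqrt{1+t^2})+1$, which maps $[1,\infty)$ onto the \emph{finite} interval $[1-\tfrac12\log 2,1)$ --- those cylinders reach back to a bounded value of $t$, where $\sup|u-1|$ is a fixed constant, so the interior estimate sees no decay at all. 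This is exactly where the analogy with \cite{Shi:2002dv} breaks down: in the asymptotically flat case the compactified time $\log r$ runs over an infinite interval, unit cylinders have uniform constants, and $r^{-k}$ decay is exponential in the new time, hence comparable across a unit cylinder; here it is polynomial in $(1-s)$ and invisible on fixed cylinders. The derivative decay is therefore not a smoothing phenomenon but a structural one, and this is what the paper uses: after a uniform H\"older bound for $u$ obtained from the divergence form of the equation (\cite[Theorem V.1.1]{Ladyzhenskaya:1988wu}), one passes to the linearized equation $\partial_s v=Lv+f$ for $v=u-1$, whose inhomogeneity $f=-\frac{R_{g(t)}-2}{4}+\frac{t^2(1+t^2)}{4}|M|^2+O(t^{-3})$ is $O(t^{-3})$ in every $C^k$ norm (exponential convergence of the flow plus the assumed decay of $\overline{R}+6$), while the zeroth-order part of the equation is strongly negative, of size $-(1+3t^2)$. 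Schauder estimates and bootstrap applied to this linearized equation and its spatial differentiations propagate the $t^{-3}$ decay of the source to all derivatives --- for instance one can work with the renormalized unknown $t(1+t^2)(u-1)$, which satisfies a uniformly parabolic equation with bounded coefficients and bounded data on the compactified interval. Your proposal never exhibits $f$ or invokes this damping/source structure, so the claimed derivative estimates do not follow from the argument as written.
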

\begin{proof}[Proof of Lemma \ref{decay Lemma}]
	First we need to verify the $C^0$ bounds. Since $|R_{g(s)}-2|$ is bounded we have
	\[\begin{split}
	&\int_{1}^{t}\frac{(R_{g(s)}-2)^*}{2}\exp\left(-\int_{s}^{t}\frac{\tau(|M|_{*})^{2}}{2}d\tau\right)ds\leq C_1,\\
	&\int_{1}^{t}\frac{(R_{g(s)}-2)_*}{2}\exp\left(-\int_{s}^{t}\frac{\tau(|M|^*)^2}{2}d\tau\right)ds\geq C_2.
	\end{split}\]
	From the scalar curvature condition (\ref{source condition}), we obtain 
	\[\begin{split}\delta^{*}(t) & =\frac{1}{t(1+t^{2})}\int_{1}^{t}\frac{\left(R_{g(s)}-s^{2}\overline{R}\right)^{*}}{2}\exp\left(-\int_{s}^{t}\frac{\tau(|M|_{*})^{2}}{2}d\tau\right)ds\\
	& =\frac{1}{t(1+t^{2})}\int_{1}^{t}\exp\left(-\int_{s}^{t}\frac{\tau(|M|_{*})^{2}}{2}d\tau\right)ds\\
	& \qquad+\frac{1}{t(1+t^{2})}\int_{1}^{t}\frac{\left(R_{g(s)}-2+6s^{2}+O(s^{-3})\right)^{*}}{2}\exp\left(-\int_{s}^{t}\frac{\tau(|M|_{*})^{2}}{2}d\tau\right)ds.
	\end{split}\]
	Since $\exp\left(-\int_{s}^{t}\frac{\tau(|M|_{*})^{2}}{2}d\tau\right)\leq 1$, we have
	\[
	\int_{1}^{t}\exp\left(-\int_{s}^{t}\frac{\tau(|M|_{*})^{2}}{2}d\tau\right)\leq t-1.
	\]
	Thus we get
	\[\begin{split}
	\delta^*(t)&\leq \frac{1}{1+t^2}+\frac{C_1-1}{t(1+t^2)}+1-\frac{1}{1+t^2}+O(t^{-3})\\
	&\leq 1+\frac{C_3}{t^3}.
	\end{split}
	\]
	To get the lower bound estimate, similarly we have
	\[\begin{split}\delta_{*}(t) & =\frac{1}{t(1+t^{2})}\int_{1}^{t}\frac{\left(R_{g(s)}-s^{2}\overline{R}\right)_{*}}{2}\exp\left(-\int_{s}^{t}\frac{\tau(|M|^{*})^{2}}{2}d\tau\right)ds\\
	& =\frac{1}{t(1+t^{2})}\int_{1}^{t}\exp\left(-\int_{s}^{t}\frac{\tau(|M|^{*})^{2}}{2}d\tau\right)ds\\
	& \qquad+\frac{1}{t(1+t^{2})}\int_{1}^{t}\frac{\left(R_{g(s)}-2+6s^{2}+O(s^{-3})\right)_{*}}{2}\exp\left(-\int_{s}^{t}\frac{\tau(|M|^{*})^{2}}{2}d\tau\right)ds.\\
	\end{split}\]
	Let $t_{0}\geq 1$ such that \[\int_{t_{0}}^{\infty}\frac{\tau(|M|^{*})^{2}}{2}d\tau\leq 1.\]
	Then, from \cite[Lemma 4.1]{Bartnik:1993er}, we have
	\[1-\frac{C_4}{t}\leq\frac{1}{t}\int_{1}^{t}\exp\left(-\int_{s}^{t}\frac{\tau(|M|^*)^2}{2}d\tau\right)\leq 1+\frac{C_4}{t}.\]
	Then we get
	\[\begin{split}\delta_{*}(t) & \geq\frac{1}{1+t^{2}}-\frac{C_4}{t(1+t^{2})}+\frac{C_2}{t(1+t^{2})}\\
	&\qquad\qquad+\frac{1}{t(1+t^{2})}\int_{1}^{t}3s^{2}\exp\left(-\int_{s}^{t}\frac{\tau(|M|^{*})^{2}}{2}d\tau\right)ds+O(t^{-3})\\
	& \geq\frac{1}{1+t^{2}}-\frac{C_4}{t(1+t^{2})}+\frac{C_2}{t(1+t^{2})}+\frac{1}{t(1+t^{2})}\left[\int_{1}^{t_{0}}3s^{2}\exp\left(-\int_{s}^{t}\frac{\tau(|M|^{*})^{2}}{2}d\tau\right)ds\right.\\
	& \qquad\qquad\qquad\qquad\left.+t^{3}-t_{0}^{3}+\int_{t_{0}}^{t}3s^{2}\left(\exp\left(-\int_{s}^{t}\frac{\tau(|M|^{*})^{2}}{2}d\tau\right)-1\right)ds\right]+O(t^{-3})\\
	&\geq 1-\frac{C_4+t_0^3-C_2}{t(1+t^{2})}-\frac{1}{t(1+t^{2})}\left[\int_{t_{0}}^{t}3s^{2}\int_{s}^{t}\tau(|M|^{*})^{2}d\tau ds\right]+O(t^{-3}).\\
	\end{split}\]
    We used the fact that $e^{\eta}-1\geq-2|\eta|\text{ for }|\eta|\leq 1$ to get the third inequality. It follows from the fact that \[|M|^2\leq C_5e^{-ct}\] that we have
    \[\begin{split}
    \int_{t_0}^{t}3s^2\int_s^t \tau(|M|^*)^2 d\tau ds &\leq\int_{t_0}^{t}3s^2\int_s^t \tau C_5 e^{-c\tau} d\tau ds\\
    &\leq C_5\int_{t_0}^{t}3s^2\int_s^t \tau e^{-c\tau} d\tau ds\\
    &\leq O(e^{-ct})+C_5\int_{t_0}^{t}3s^2 \left(\frac{1}{c}se^{-cs}+\frac{1}{c^2}e^{-cs}\right) ds=O(e^{-ct}).
    \end{split}
    \]
    Hence we obtain
    \[
    \begin{split}
    \delta_{*}(t)
    & \geq1-\frac{C_6}{t^3},
    \end{split}
    \]
    and thus
	\[|u(t,x)-1|\leq\frac{C}{t^3}.\]
	Now we find the estimate for derivatives. Write the equation (\ref{targetPDE}) as follows
	\[\begin{split}
	&t(1+t^{2})\frac{\partial u}{\partial t}\\
	&\qquad=\frac{\partial}{\partial x^{i}}\left(\frac{1}{2}u^{2}g^{ij}\frac{\partial u}{\partial x^{j}}\right)-\left[\frac{\partial}{\partial x^{i}}\left(\frac{u^{2}}{2\sqrt{|g|}}\right)\right]\left(\sqrt{|g|}g^{ij}\frac{\partial u}{\partial x^{j}}\right)\\
	& \qquad\qquad-\frac{u^{3}}{4}(R_{g(t)}-t^{2}\overline{R})+u\left(\frac{1+3t^{2}}{2}+\frac{t^{2}(1+t^{2})|M|^{2}}{4}\right)\\
	&\qquad =\frac{\partial}{\partial x^{i}}\left(\frac{1}{2}u^{2}g^{ij}\frac{\partial u}{\partial x^{j}}\right)-\left(ug^{ij}\frac{\partial u}{\partial x^{i}}\frac{\partial u}{\partial x^{j}}\right)\\
	&\qquad\qquad-\left[\frac{\partial}{\partial x^{i}}\left(\frac{1}{2\sqrt{|g|}}\right)\right]\left(u^{2}\sqrt{|g|}g^{ij}\frac{\partial u}{\partial x^{j}}\right) -\frac{u^{3}}{4}(R_{g(t)}-t^{2}\overline{R})\\
	&\qquad\qquad+u\left(\frac{1+3t^{2}}{2}+\frac{t^{2}(1+t^{2})|M|^{2}}{4}\right).\\
	\end{split}\]
	Then by letting $s=\log \left(\frac{t}{\sqrt{1+t^2}}\right)+1$, we get the following form 
	\begin{equation}
	\frac{\partial u}{\partial s}=\frac{\partial}{\partial x^{i}}a_{i}(x,s,u,Du)-a(x,s,u,Du)
	\end{equation} 
	where \[a_{i}(x,s,u,p)=\frac{1}{2}u^{2}g^{ij}p_{j},\]
	\[\begin{split}
	a(x,s,u,p)=ug^{ij}p_{i}&p_{j}+\left[\frac{\partial}{\partial x^{i}}\left(\frac{1}{2\sqrt{|g|}}\right)\right]\left(u^{2}\sqrt{|g|}g^{ij}p_{j}\right)\\ &+\frac{u^{3}}{4}(R_{g(t)}-t^{2}\overline{R})-u\left(\frac{1+3t^{2}}{2}+\frac{t^{2}(1+t^{2})|M|^{2}}{4}\right).
	\end{split}\]
	It follows from the $C^0$ estimate of $u$ that\[a_{i}p_{i}\geq C|p|^{2},\quad|a_{i}|\leq C|p|,\quad|a|\leq C(1+|p|^{2}),\] where $C$ is independent of $s$. By \cite[Theorem V.1.1]{Ladyzhenskaya:1988wu}, for any $s_0,s_1\in[1-\frac{1}{2}\log 2,1)$ with $s_0<s_1$, there are constants $\beta>0$ and $C_1>0$ independent of $s_0,s_1$, such that
	\[||u||_{\beta,\beta/2;A_{[s_0,s_1]}}\leq C_1.\]
	Now consider the function $v=u-1$, we have the linear parabolic equation in terms of $v$
	\[\begin{split}
	\frac{\partial v}{\partial s}&=\frac{u^{2}}{2}g^{ij}\frac{\partial^{2}v}{\partial x^{i}\partial x^{j}}-\frac{u^{2}}{2}\frac{\partial}{\partial x^{i}}\left(\sqrt{|g|}g^{ij}\right)\frac{\partial v}{\partial x^{j}}\\
	&\qquad-\frac{u^{3}}{4}(R_{g(t)}-t^{2}\overline{R})+u\left(\frac{1+3t^{2}}{2}+\frac{t^{2}(1+t^{2})|M|^{2}}{4}\right)\\
	&:=Lv-\frac{1}{4}(R_{g(t)}-t^2\overline{R})+\frac{1+3t^2}{2}+\frac{t^2(1+t^2)|M|^2}{4}\\
	&=Lv+f,
	\end{split}\]
	where $f(x,t)=-\frac{R_{g(t)}-2}{4}+\frac{t^2(1+t^2)|M|^2}{4}+O(t^{-3})=O(t^{-3})$, since $|R_{g(t)}-2|$ and $|M|$ converge to $0$ exponentially fast. Therefore the usual Schauder interior estimates \cite[Theorem IV.10.1]{Ladyzhenskaya:1988wu} and bootstrap argument give the desired result.
\end{proof}

\begin{proof}[Proof of Theorem \ref{main theorem}]
	It suffices to show that the metric $\overline{g}$ obtained from Theorem \ref{global existence theorem} is asymptotically hyperbolic. From Lemma \ref{decay Lemma}, we have the following expression of the metric $\overline{g}$:
	\begin{equation*}
	\begin{split}
	\overline{g}&=\frac{u^2}{1+t^2}dt^2+t^2g(t)\\
	&=\frac{dt^2}{1+t^2}+O(t^{-5})dt^2+t^2g(t).
	\end{split}
	\end{equation*}
	This implies that
	\begin{equation*}
	\begin{split}
	\overline{g}_{tt}&=\frac{1}{t^2}-\frac{1}{t^4}+\frac{\overline{g}_{tt}^{(-5)}}{t^5}+\frac{\overline{g}_{tt}^{(-6)}}{t^6}+O(t^{-7}),\\
	\overline{g}_{ij}&=t^2\sigma_{ij}+O(e^{-ct})
	\end{split}
	\end{equation*}
	where $\sigma_{ij}$ is the standard metric on the sphere $S^2$ and $\overline{g}_{tt}^{(-5)},\overline{g}_{tt}^{(-6)}\in C^\infty(\Sigma)$. By adopting the definition in \cite{Chen:2016hm}, $\overline{g}$ is asymptotically hyperbolic.
\end{proof}

\begin{corollary}
	Let $(\Sigma,\sigma)$ be the $2$-sphere with the standard metric, and fix any $0<m<1$. Then by prescribing the scalar curvature $\overline{R}\equiv -6$ on $N=[1,\infty)\times\Sigma$, the metric $\overline{g}$ obtained from Theorem \ref{main theorem}, with the initial condition for the constant mean curvature $H$ on $\{1\}\times\Sigma$ as
	\[H\equiv \sqrt{8(1-m)},\]
	is the anti-de Sitter Riemannian Schwarzschild metric with the mass $m$.
\end{corollary}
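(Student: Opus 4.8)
The plan is to exploit the special structure of the round initial data to collapse the construction of Theorem~\ref{main theorem} to an explicit ordinary differential equation. The essential observation is that the standard sphere $(\Sigma,\sigma)$ of area $4\pi$ is a stationary point of Hamilton's modified Ricci flow (\ref{RF}): since $R_\sigma\equiv 2=r$, the Ricci potential $f$ satisfies $\Delta f=R-r=0$ and is therefore constant, so $M_{ij}=\tfrac12(r-R)g_{ij}+D_iD_jf\equiv 0$ and hence $g(t)\equiv\sigma$ for all $t$, with $R_{g(t)}\equiv 2$ and $|M|\equiv 0$. Moreover the prescribed mean curvature $H\equiv\sqrt{8(1-m)}$ is constant, so by $H=\tfrac{2\sqrt{1+t^2}}{tu}$ the initial datum $u(1,\cdot)=\varphi$ is the constant $1/\sqrt{1-m}$, i.e. $\varphi^{-2}=1-m$. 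Since the coefficients of (\ref{targetPDE}) are now rotationally invariant and the initial datum is constant, the uniqueness assertion in Theorem~\ref{global existence theorem} forces the solution to be rotationally symmetric; that is, $u=u(t)$ depends on $t$ alone.

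First I would confirm that the hypotheses of Theorem~\ref{main theorem} hold for this data. With $|M|\equiv 0$, $R_{g(s)}\equiv 2$ and $\overline{R}\equiv -6$ one has $R_{g(s)}-s^2\overline{R}=2+6s^2>0$, so the integrand in the defining supremum (\ref{constant K}) is negative for $t>1$ and vanishes at $t=1$, giving $K=0$. The admissibility condition (\ref{extension condition}) then reduces to $H>0$, which holds since $m<1$, and the constraint (\ref{global condition}) becomes vacuous; thus Theorem~\ref{main theorem} does produce a global asymptotically hyperbolic metric for this choice.

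Next I would reduce to an ODE. Setting $w=u^{-2}$ and substituting $|M|\equiv 0$, $R_{g(t)}\equiv 2$, $\overline{R}\equiv -6$, and $\nabla w=\Delta w=0$ into (\ref{after substitution}) gives the first-order linear equation
\[\frac{dw}{dt}=\frac{1+3t^2}{t(1+t^2)}\,(1-w).\]
Using $\int\frac{1+3t^2}{t(1+t^2)}\,dt=\int\Bigl(\frac1t+\frac{2t}{1+t^2}\Bigr)\,dt=\log\bigl(t(1+t^2)\bigr)$ one integrates to obtain $1-w(t)=\bigl(1-w(1)\bigr)\,\tfrac{2}{t(1+t^2)}$. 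The initial condition $w(1)=\varphi^{-2}=1-m$ then fixes the constant, so
\[u^{-2}=w=1-\frac{2m}{t(1+t^2)},\qquad \frac{u^2}{1+t^2}=\frac{t}{t+t^3-2m}=\frac{1}{1+t^2-\frac{2m}{t}}.\]

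Finally, substituting into (\ref{metric}) and writing $r=t$ yields
\[\overline{g}=\frac{dt^2}{1+t^2-\frac{2m}{t}}+t^2\sigma,\]
which is precisely the Riemannian anti–de Sitter Schwarzschild metric of mass $m$. Since $t+t^3-2m$ is increasing and equals $2(1-m)>0$ at $t=1$, the metric is nondegenerate throughout $[1,\infty)\times\Sigma$, so $N$ is the rotationally symmetric region $\{t\ge 1\}$ lying strictly outside the horizon; one checks directly that the coordinate-sphere mean curvature $\tfrac{2}{t}\sqrt{1+t^2-2m/t}$ at $t=1$ equals $\sqrt{8(1-m)}$, confirming the normalization. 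The only step requiring genuine care is the reduction to rotational symmetry, namely establishing $g(t)\equiv\sigma$ and $u=u(t)$ from the stationarity of the round metric together with uniqueness in Theorem~\ref{global existence theorem}; once this is secured, the remaining work is the elementary ODE solution and the identification of the closed form with anti–de Sitter Schwarzschild.
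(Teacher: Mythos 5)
Your proposal is correct, and its computational heart is identical to the paper's: stationarity of the round sphere under the modified flow ($|M|\equiv 0$, $R_{g(t)}\equiv 2$), the evaluation $K=0$, the substitution $w=u^{-2}$, and the same integrating factor $t(1+t^2)$ leading to $u^{-2}=1-\frac{2m}{t(1+t^2)}$. The one place you diverge is in how the \emph{exact} formula for $u$ is justified. The paper never argues rotational symmetry separately: it applies Proposition \ref{proposition for delta} directly, observing that in this setting $\delta_*=\delta^*$ and the constant initial datum makes $u_*(1)=u^*(1)$, so the two-sided bounds (\ref{lower bound}) and (\ref{upper bound}) coincide and squeeze $u^{-2}(t,x)$ to the explicit value — symmetry of $u$ falls out as a byproduct. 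You instead first establish $u=u(t)$ by the symmetry-plus-uniqueness argument (rotations of $(\Sigma,\sigma)$ preserve the coefficients of (\ref{targetPDE}), so by uniqueness in Theorem \ref{global existence theorem} the solution is rotation-invariant), and then solve the resulting ODE from (\ref{after substitution}). Both routes are sound; the paper's squeeze is slicker in that it needs no symmetry argument at all, while yours is more transparent about \emph{why} the bounds in Proposition \ref{proposition for delta} are saturated, and your added checks (nondegeneracy of $t^3+t-2m$ on $[1,\infty)$ and the mean-curvature normalization at $t=1$) make the identification with anti--de Sitter Schwarzschild more complete than the paper's. The only item in the paper's proof you omit is the closing remark that the metric, once in explicit form, extends from $t=1$ down to the totally geodesic boundary at the largest root $t_0$ of $t^3+t-2m$; this is not needed for the statement as given.
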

\begin{proof}
	Note that from the initial metric $(\Sigma,\sigma)$ the solution to Hamilton's modified Ricci flow is constant, i.e., $|M|\equiv 0$ and $R_{g(t)}\equiv 2$. Then from (\ref{constant K}), we have
	\[K=\sup_{1\leq t<\infty}\left\{-\int_{1}^{t}\frac{2+6s^2}{4}ds\right\}=0.\]
	Thus by Theorem \ref{main theorem}, there exists an asymptotically hyperbolic metric $\overline{g}$ with mean curvature $H=\sqrt{8(1-m)}$ on $\{1\}\times\Sigma$. It is easy to see that from Proposition \ref{proposition for delta} we have 
	\[u^{-2}(t,x)=1-\frac{1}{t(1+t^2)}\left(2-\frac{H^2}{4}\right),\]
	and hence the metric on $N$ we obtained is
	\[\begin{split}
	\overline{g}&=\left(1+t^2-\frac{1}{t}\left(2-\frac{H^2}{4}\right)\right)^{-1}dt^2+t^2\sigma\\
	&=\left(1+t^2-\frac{2m}{t}\right)^{-1}dt^2+t^2\sigma
	\end{split}
	\]
	Notice that the boundary at $t=1$ is not totally geodesic. However, once we obtain the explicit form, we can extend this metric on $N=[1,\infty)\times\Sigma$ up to the totally geodesic boundary as $\overline{N}=[t_0,\infty)\times\Sigma$ where $t_0$ is the largest zero of the polynomial $t^3+t-2m$.
\end{proof}

\section{Rigidity and Monotonicity of the Hawking Mass}\label{hawking mass section}
In this section we prove Theorem \ref{rigidity theorem} regarding rigidity and monotonicity of the Hawking mass with the foliation we used in previous sections. 
The proof basically follows an argument in \cite[Theorem 5]{Lin:2016ca}.

\begin{proof}[Proof of Theorem \ref{rigidity theorem}]
	Consider $N=[1,\infty)\times\Sigma$ equipped with the metric
	\begin{equation*}
	\overline{g}=\frac{u^2}{1+t^2}dt^2+t^2g(t)
	\end{equation*}
	where $g(t)$ is the solution of the modified Ricci flow.
	From (\ref{hawking mass relation}) we have
	\begin{equation*}
	\mathbf{M}(\overline{g})(V^{(0)})=\lim_{t\rightarrow\infty}\tilde{\mathfrak{m}}_H(\Sigma_{t}).
	\end{equation*}
	We compute the Hawking mass of $\Sigma_{t}=\{t\}\times\Sigma$
	\begin{equation}\label{hawking mass formula}
	\begin{split}
	\tilde{\mathfrak{m}}_H(\Sigma_{t})&=\sqrt{\frac{|\Sigma_{t}|}{16\pi}}\left(1-\frac{1}{16\pi}\int_{\Sigma_{t}}H^2\, d\sigma_t+\frac{|\Sigma_{t}|}{4\pi}\right)\\
	&=\sqrt{\frac{4\pi t^2}{16\pi}}\left(1-\frac{1}{16\pi}\int_{\Sigma}\frac{4(1+t^2)}{t^2u^2}t^2\, d\sigma+\frac{4\pi t^2}{4\pi}\right)\\
	&=\frac{t(1+t^2)}{2}\left(1-\frac{1}{4\pi}\int_{\Sigma}u^{-2}d\sigma\right)\\
	&=\frac{1}{4\pi}\int_{\Sigma}\frac{t(1+t^2)}{2}(1-u^{-2})d\sigma.
	\end{split}
	\end{equation}
	Hence we have the following equality
	\begin{equation}
	\mathbf{M}(\overline{g})(V^{(0)})=\lim_{t\rightarrow\infty}\frac{1}{4\pi}\int_{\Sigma}\frac{t(1+t^2)}{2}(1-u^{-2})d\sigma.
	\end{equation}
	Now by Gauss-Bonnet theorem, we have
	\begin{equation*}
	\begin{split}
	\frac{d}{dt}\tilde{\mathfrak{m}}_H(\Sigma_{t})&=\frac{1}{4\pi}\int_{\Sigma}\frac{3t^2+1}{2}(1-u^{-2})+\frac{t(1+t^2)}{2}2u^{-3}\frac{\partial u}{\partial t}d\sigma\\
	&=\frac{1}{4\pi}\int_{\Sigma}\frac{3t^2+1}{2}+\frac{u^{-1}\Delta u}{2}-\frac{R}{4}+\frac{t^2\overline{R}}{4}+\frac{t^2(1+t^2)}{4u^2}|M|^2 d\sigma\\
	&=\frac{1}{4\pi}\int_{\Sigma}\frac{(\overline{R}+6)t^2}{4}+\frac{u^{-1}\Delta u}{2}+\frac{t^2(1+t^2)}{4u^2}|M|^2 d\sigma\\
	&=\frac{1}{8\pi}\int_{\Sigma}\frac{(\overline{R}+6)t^2}{2}+\frac{|\nabla u|^2}{u^2}+\frac{t^2(1+t^2)}{2u^2}|M|^2 d\sigma\geq 0\\
	\end{split}
	\end{equation*}
	given $\overline{R}\geq -6$. Thus the condition $\mathbf{M}(\overline{g})(V^{(0)})=\tilde{\mathfrak{m}}_H(\Sigma)$ implies that $\frac{d}{dt}\tilde{\mathfrak{m}}_H(\Sigma_{t})=0$, that is, $\overline{R}=-6,|M|=0,$ and $\nabla u=0$. It follows from $|M|=0$ that $(\Sigma,g_1)$ is isometric to a standard sphere. Since $\nabla u=0$, $N$ is rotationally symmetric. From the result \cite[Theorem 3.3]{Sakovich:2017jo} by Sakovich and Sormani, if $\tilde{\mathfrak{m}}_H(\Sigma)=0$ then $N$ is isometric to a hyperbolic space or if $\tilde{\mathfrak{m}}_H(\Sigma)=m>0$ then $N$ is isometric to a Riemannian anti-de Sitter Schwarzschild manifold of mass $m$.
\end{proof}
\bibliography{untitled}
\end{document}